\theoremstyle{definition}
\newtheorem{thm}{Theorem}[section]
\newtheorem{lem}{Lemma}[section]
\newtheorem{cor}{Corollary}[section]
\newtheorem*{remark}{Remark}
\newcommand{\R}{\mathbbm{R}}                 
\newcommand{\N}{\mathbbm{N}} 
\newcommand{\Z}{\mathbbm{Z}}                 
\newcommand{\eps}{\epsilon}
\newcommand{\bigo}[1]{\mathcal{O}\left(#1\right)}
\renewcommand{\S}[3]{\displaystyle{\sum_{#1}^{#2}{#3}}}
\begin{document}


\title{A Local Limit Theorem and Loss of Rotational Symmetry of Planar Symmetric Simple Random Walk}
\author{Christian Bene\v{s}}
\date{}
\maketitle

\begin{abstract}
We 
derive a 
local limit theorem 
for normal, moderate, and large deviations
for symmetric simple random walk on the square lattice in dimensions one and two that is an improvement of existing results for points that are particularly distant from the walk's starting point. More specifically, we give explicit asymptotic expressions in terms of $n$ and $x$, where $x$ is thought of as dependent on $n$, in dimensions one and two for $P(S_n=x)$, the probability that symmetric simple random walk $S$ started at the origin is at some point $x$ at time $n$, that are valid for all $x$. 
We also show that the behavior of planar symmetric simple random walk differs radically from that of planar standard Brownian motion outside of the disk of radius $n^{3/4}$, where the random walk ceases to be approximately rotationally symmetric. Indeed, if $n^{3/4}=o(|S_n|)$, $S_n$ is more likely to be found along the coordinate axes. In this paper, we give a description of how the transition from approximate rotational symmetry to complete concentration of $S$ along the coordinate axes occurs.
\end{abstract}


\section{Introduction}\label{SectionIntro}

Symmetric simple random walk $S$ in dimension $d\in\N$ is defined by
 $S(0)=0$ and for $n\in\N$, by $S_n = \sum_{k=1}^{n}X_k$,
where $\{X_k\}_{k\in\N}$ are independent random vectors satisfying
$P(X_k = \pm e_i) = \frac{1}{2d}, i=1,\ldots, d$
and $\{e_i\}_{i\in\{1,\ldots, d\}}$ is the standard orthonormal basis of $\R^d$. 

Donsker's invariance principle (see \cite{donsker}) tells us that a large class of rescaled random walks converge in distribution 
to standard Brownian motion. This, together with a number of strong approximation results (see \cite{revesz} for an extensive survey of the topic), suggests that Brownian motion and random walk have similar behavior ``at large scales" in all dimensions. 
This is often a good way of thinking about these processes and one can show, in particular with the help of coupling arguments, (see \cite{puckette1}, \cite{puckette2}, and \cite{holes} for examples) that random walk and Brownian motion share many properties. However, as will be made clear in this paper, even at macroscopic scales, there are fundamental differences between the two processes.

While the transition density for $d$-dimensional Brownian motion has the well-known exact expression  
$p_t(x)=(2\pi t)^{-d/2}e^{-\|x\|_2^2/2t}$ (see for instance \cite{MortersPeres}), where $x\in\R^d, t\in\R$, and, here and throughout this paper, $\|\cdot \|_p$ denotes $L^p$ norm, the expression for the probability 
\begin{equation}\label{mainprob}
P(S_n=x)
\end{equation}
that $d$-dimensional symmetric simple random walk $S$ started at the origin is at a given location $x=(x_1, \ldots , x_d) \in\Z^d$ at time $n\in\N$ cannot immediately be expressed as a convenient function of $x$ and $n$. The multidimensional central limit theorem (see for instance \cite{jacodprotter}) suggests that if $\sum_{i=1}^d x_i+n$ is even, this probability is more or less the same as that of Brownian motion being in a ball centered at $x\in\R^d$, of volume 2 (the factor 2 accounts for the fact that simple random walk has period 2) at time $n/d$, which is approximately $2(d/2\pi n)^{d/2}\exp\{-d\|x\|_2^2/2n\}$. 
As the reader can see below in the cases $d=1, 2$, this argument is correct for points in and somewhat beyond the disk of radius $\sqrt{n}$, that is, the typical range of $S_n$, but in fact also for points up to a distance of about $n^{3/4}$. However, it fails for points beyond that distance and thus for a large majority of points that are attainable by the walk at time $n$. 

Estimating $P(S_n=x)$ for random walks under a wide variety of assumptions has been the object of study of numerous authors since the first half of the twentieth century. In his seminal paper (\cite{polya}), P\'{o}lya showed that for $d$-dimensional symmetric simple random walk $S$ if $x=(x_1, \ldots , x_d) $ is fixed and $\sum_{i=1}^d x_i+n$ is even, 
\begin{equation}\label{polyaeq}
\lim_{n\to\infty} n^{d/2}P(S_n=x)=2\left(\frac{d}{2\pi}\right)^{d/2}.
\end{equation}
See also \cite{erdostaylor}, \cite{greenbook}, and \cite{lawlerlimic} for refinements of this result with error terms. The equality in \eqref{polyaeq}, together with the fact that $P(\|S_n\|_2\geq r\sqrt{n})$ decays exponentially with $r$ (see Corollary 3.1 in \cite{benesnotes} for the one-dimensional case; the $d$-dimensional case then follows easily by using (2.1) in \cite{lawlerlimic}) suggests that the distribution of the random walk at time $n$ is roughly uniform in a disk of radius of order about $\sqrt{n}$ while the likelihood that the walk is outside of a disk of radius of any order larger than $\sqrt{n}$ is negligible.


The behavior of a random walk at time $n$ up to distances of order greater than $\sqrt{n}$ belongs to the realms of moderate and large deviations. When trying to understand the large deviations behavior of a random walk, one is interested in the probability of finding it at distance of order $n$ at time $n$, whereas the moderate deviations behavior of a walk is its behavior at time $n$ at a distance of order strictly greater than $\sqrt{n}$ but strictly smaller than $n$. While results on moderate and large deviations are of a slightly different nature than local limit theorems, there is a strong connection between the two through the large deviations rate function which we define below. Therefore, we briefly mention here some of the basic definitions and facts from large deviation theory that are relevant to us and discuss the connection with local limit theorems in more detail in the Appendix.


Large deviations theory has become an important and vast area of probability theory after Varadhan introduced abstract notions of a general theory in \cite{varadhan}, though some important ideas go back to Boltzmann (\cite{boltzmann}) and Cram\'{e}r (\cite{cramer}). We will only define here the main notions in the specific context of this paper. For a much more general treatment of  the theory, the reader may consult, for instance, \cite{dembozeitouni}. 

We let $\mathcal{X}$ denote the convex hull of the points of the increment distribution of the random walk. In particular, for symmetric simple random walk in $\Z^d, \mathcal{X}=\{x\in\R^d:\|x\|_1\leq 1\}$
. A rate function is a mapping $I:\mathcal{X}\to \R_+$ such that for every $a\in\R_+$, the level set $\{x:I(x)\leq a\}$ is closed. If, moreover, the level sets are compact, we say the rate function is good. Clearly, a rate function for a random walk with bounded increments is always good.

Let $(b_n)$ be a positive sequence such that $b_n/n^{1/2}\to\infty$ as $n\to\infty$ and let $(a_n)$ be a sequence. We say that the sequence $(S_n/b_n)$ satisfies a large deviations principle with rate function $I$ and speed $(a_n)$ if for all $\Gamma\in\mathcal{B}(\mathcal{X})$, that is, for all Borel sets in $\mathcal{X}$, 
$$\!-\!\inf_{\alpha\in\Gamma^{o}}\!I(\alpha)\leq \liminf_{n\to\infty}\frac{1}{a_n}\!\log P\left(\frac{S_n}{b_n}\in \Gamma\right)\leq \limsup_{n\to\infty}\frac{1}{a_n}\!\log P\left(\frac{S_n}{b_n}\in \Gamma\right)\leq \!-\!\inf_{\alpha\in\overline{\Gamma}}\!I(\alpha).$$
In the cases where 
\begin{equation}\label{moddev}
b_n/n\to 0 \text{ as } n\to\infty,
\end{equation}
one often refers to ``moderate deviations" in the literature. For more details, see \cite{dembozeitouni} and \cite{eichelsbacherlowe}. Throughout this paper, when we talk about a random walk's large deviations rate function, we mean the rate function for the sequence $(S_n/n)$.


Moderate and large deviations principles are satisfied by random walks under a variety of assumptions on the increment distribution. The  moderate deviations behavior of a large class of random walks is essentially Gaussian (see Theorem 3.7.1 in \cite{dembozeitouni}). In particular, for symmetric simple random walk in $\Z^d$, under the assumption \eqref{moddev}, that is, in the moderate deviations regime, $I(\alpha)=-\frac{d}{2}\|x\|_2^2$. We will see below that, in fact, this moderate deviations rate function misses much of the fine detail of the decay of the probability in \eqref{mainprob}. The large deviations rate function for random walks is known to be the Legendre-Fenchel transform of the logarithmic moment generating function (see \cite{dembozeitouni} and also the Appendix of this paper for a brief argument for why this should be the case):
$$\Lambda(\alpha)=\sup_{\lambda\in\R^d}\{\langle \lambda, \alpha\rangle-\log M(\lambda)\}
.$$

There is a long history of results leading to very precise local limit theorems for large classes of random walks (see \cite{borovkovbis} for what is probably the most extensive monograph on the topic, in particular Chapter 6 for the case discussed in this paper). For the purpose of comparison with the present work, we include in the appendix a brief description of the results from some of the papers that made fundamental contributions to the problem. We point out that the results we present in this paper for the particular case of the simple random walk are not contained in any of them.

Throughout this paper, the notation $f(n) =\bigo{g(n)}$ will mean that there is a universal constant $C$ such that $|f(n)|\leq C|g(n)|$ for all $n$. If the constant $C$ depends on some other quantity, this will be mentioned or made explicit inside the $\bigo{\cdot}$. We will write $f(n) \sim g(n)$ whenever $\lim_{n\to\infty}f(n)/g(n)=1$
. For notational simplicity, multiplicative constants will generally all be denoted by $C$, though they may be different from one line to the next.

This paper is organized as follows: In Section \ref{Results}, we state the one-dimensional and two-dimensional local limit theorems and the corollary to our planar result which shows where planar symmetric simple random walk is rotationally symmetric and where it isn't. In Sections \ref{1d} and \ref{2d}, we provide the proofs in the, respectively, one-dimensional and two-dimensional case. Finally, the Appendix presents some related results and shows explicitly that the exponential decay we obtain for \eqref{mainprob} is indeed dictated by the simple random walk large deviations rate function, as expected from the discussion in the first part of the Appendix.

In order to be able to get asymptotics for \eqref{mainprob} for the entire range of simple random walk, the approach based on the Fourier transform that we present in the Appendix seems unlikely to be successful. In this paper, we take a more direct approach and use Stirling's formula to approximate the binomial coefficients that appear in the expressions of \eqref{mainprob}. This is relatively straightforward in dimension one but requires considerably more work in dimension two where approaching the question from two different angles leads to two different expressions. One of these expressions gives asymptotic values for \eqref{mainprob} in the entire range of the random walk. The other doesn't, but has the advantage 
of being obtained by using a method that can be extended to dimensions greater than two. Both expressions can be used to show that planar simple random walk is essentially rotationally invariant (in a sense to be made precise below) in the disk of radius $n^{3/4}$ and ceases to be so at greater distances. 

\section{Statement of Results}\label{Results}

As mentioned in the introduction and discussed further in the appendix, there are a number of abstract local limit theorems available in the literature that are valid well beyond the typical range of a random walk. There are also a number of local limit theorems (see, for instance, \cite{spitzer}, \cite{greenbook}, \cite{woess}, \cite{uchiyama}, and \cite{lawlerlimic}) that give explicit asymptotic expressions for the probability in \eqref{mainprob}. The aim of this paper is to reconcile the advantages of the two types of results in the symmetric simple random walk case by providing explicit asymptotic expressions for $P(S_n=x)$ in dimensions 1 and 2 that are valid for 
all $x$
. 

Theorem \ref{LCLTthm2d} below can be used to show that in dimension 2, an interesting phenomenon arises: For points $x\in\Z^2$ with $|x|>> n^{3/4}$, some points along a same discrete circle are much more likely to be hit by $S$ than others. More precisely, for any $r=r(n)\in\R$ with $n^{3/4}=o(r)$, there exist points $x_1, x_2\in\Z^2$ with $|x_1|, |x_2|\in [r,r+2]$ such that $\lim_{n\to\infty} P(S_n=x_1)/P(S_n=x_2)=0$. In other words, while planar symmetric simple random walk 
is essentially rotationally symmetric (we make this precise below) in the disk of radius $n^{3/4}$, it loses this symmetry outside of that disk. This is the content of Corollary \ref{coro} below.




The first result of this paper is a one-dimensional local  limit theorem which gives the asymptotic behavior of $P(S_n=x)$ for all $x$.
It extends the result of \cite{khinchine}, valid for $x=o(n)$, to all $x$.
\begin{thm}\label{LCLTthm1d}
Suppose $S$ is one-dimensional symmetric simple random walk and $n\in\N$.
Suppose
\begin{itemize}

\item $c\in\N$ is a constant with $c<n$;

\item $\ell_1, \ell_3$ are 
functions of $n$ such that $\ell_1(n), \ell_3(n)=o(n), \ell_3(n)\to\infty$ as $n\to\infty$;

\item for $0<a<1, \ell^a_2$ is a function of $n$ such that 
$\ell^a_2(n)=o(n)$
.
\end{itemize}
For $x\in\Z$, suppose, moreover, that the constant $c$ and functions $\ell_1, \ell^a_2, \ell_3$ are chosen below so that $-n\leq x\leq n$ and $x+n$ is an even integer. Then
\begin{equation}\label{1dimprob}
P(S_n=x) =   \exp\left(\phi(n,x)\right) f_x(n),
\end{equation}
where
\begin{equation}\label{1dimdetails}
f_x(n)=\left\{
\begin{array}{ll} 
\sqrt{\frac{2}{\pi n}}\left(1+\bigo{\frac{\ell_1(n)^2}{n^2}+\frac{1}{n}}\right), & |x|=\ell_1(n),\\
\sqrt{\frac{2}{\pi n}}\frac{1}{\sqrt{1-a^2}}\left(1\!+\!\bigo{\frac{\ell_2^a(n)}{n(1-a^2)}}\right), & |x|= an\!+\!\ell_2^a(n), 0<a<1,\\
\frac{1}{\sqrt{\pi \ell_3(n)}}\left(1+\bigo{\frac{\ell_3(n)}{n}}+\bigo{\frac{1}{\ell_3(n)}}\right), & |x| = n-\ell_3(n),\\
\frac{de^{-1/6c}}{\sqrt{\pi c}}\left(1+\bigo{\frac{c}{n}}\right), & |x| = n-c,\\
1, & |x|=n,
\end{array}
\right.
\end{equation}
\begin{equation}\label{phinx}
\phi(n,x) =
 - \S{\ell=1}{\infty}{\frac{1}{2\ell(2\ell-1)}\frac{x^{2\ell}}{n^{2\ell-1}}},
 \end{equation}
 $d\in [\frac{e^{11/12}}{\sqrt{2\pi}},\frac{2\pi}{e^{11/6}})$, and the constants in the $\bigo{\cdot}$ terms are universal. 
In particular, for every $n\in\N$, every $N\geq 2, s<\frac{2N-1}{2N}$, and $|x|\leq n^s$, 
\begin{equation}\label{1das}
P(S_n=x) = \sqrt{\frac{2}{\pi n}}\exp\left(-\S{l=1}{N-1}{\frac{1}{2\ell(2\ell-1)}\frac{x^{2\ell}}{n^{2\ell-1}}}\right)
    \left(1+\bigo{
    \frac{x^{2N}}{n^{2N-1}}+\frac{x^2}{n^2}+
    \frac{1}{n}}\right).
\end{equation}
\end{thm}

\begin{remark} While the penultimate line in \eqref{1dimdetails} does not provide exact asymptotics, since $d$ lies in an interval (albeit a short one, as $\frac{e^{11/12}}{\sqrt{2\pi}}\approx 0.9977$ and $\frac{2\pi}{e^{11/6}}\approx1.0046$), we of course have the alternative expression 
$$P(S_n=n-c)={n\choose \frac{2n-c}{2}}\left(\frac{1}{2}\right)^n$$
which can be computed exactly for any constant $c$.

\end{remark}

\begin{remark} Note that the transitions between the regimes in Theorem \ref{LCLTthm1d} work as they should. Indeed, when setting $a=0$ in the second line of \eqref{1dimdetails}, one obtains the main order term of the first line and when setting $a=1-\ell_3(n)/n$, so that $an=n-\ell_3(n)$, one obtains the main order term of the third line.
\end{remark}

\begin{remark} As mentioned in the Introduction, one can verify that the sum in \eqref{phinx} is equal to $n\Lambda(x/n)$ where $\Lambda$ is the large deviations rate function for symmetric simple random walk. We do not do this verification here, but outline the main steps of the considerably more complicated two-dimensional case in the Appendix.
\end{remark}
 
Note that in Theorem \ref{LCLTthm1d}, one should, as mentioned before, think of $x$ as being dependent on $n$ and going to infinity, since the error terms all vanish only as $n\to\infty$, so that for fixed $x$ with $x+n$ even, the limiting probability is just $\sqrt{2/\pi n}$ as shown by P\'{o}lya in \cite{polya}.

One consequence of Theorem \ref{LCLTthm1d} is that for one-dimensional symmetric simple random walk, the exponent in the probability $P(S_n=x)$ is the same as for the corresponding Brownian motion probability 
of being in a ball of volume 2 (recall that this is to take into account the periodicity of the random walk) around $x$, namely $-|x|^2/2$, as long as $|x| = \bigo{n^{3/4}}$. However as soon as $|x| >>n^{3/4}$, the random walk probability has an additional exponential term which makes the random walk probability smaller than the corresponding Brownian motion probability. This should of course not be surprising, since for $x$ with $|x|>n, P(S_n=x)=0$, while $P(B(n)=x)>0$. Theorem \ref{LCLTthm1d} describes precisely the transition from the exponent $-|x|^2/2$ to a probability of 0 as the magnitude of $x$ goes from $n^{3/4}$ to $n$. 

This difference in behavior between symmetric simple random walk and standard Brownian motion also exists in the plane, though another interesting phenomenon arises in that case: If $n^{3/4}=o(\|(x,y)\|_2)$, the asymptotic behavior of $P(S_n=(x,y))$ depends on the location of $(x,y)$, not just on $\|(x,y)\|_2$:

\begin{thm}\label{LCLTthm2d}
Let $S$ be a planar symmetric simple random walk and $n\in\N$. Let 
\begin{itemize}
\setlength\itemsep{0.001em}
\item $\ell_1, \ell_1'$ be integer-valued functions of $n$ such that $\ell_1(n), \ell_1'(n)=o(n)$;
\item $0<a<1/2, 0<a_x < a_y<1$ with $a_x+a_y\leq 1, 0<b<1$ and $\ell^a_2$, $\tilde{\ell}^a_2$, $\ell^b_2$, $\ell^{a_x}_2$, $\ell^{a_y}_2$ be functions of $n$ such that 
$\ell^a_2(n)$, $\tilde{\ell}^a_2(n)$, $\ell^b_2(n)$, $\ell^{a_x}_2(n)$, $\ell^{a_y}_2(n)=o(n)$;
\item $\ell_3, \ell_3'$ be functions of $n$ such that $\ell_3(n), \ell_3'(n)=o(n), \ell_3(n), \ell_3'(n)\to\infty$ as $n\to\infty$;
\item $c, c_x, c_y
>0$ be constants;

\item $d$ be as in Theorem \ref{LCLTthm1d}. 
\end{itemize}
For $(x,y)\in\Z^2$, suppose, moreover, that the constants $c, c_x, c_y$ and functions $\ell_1, \ell_1', \ell^a_2, \tilde{\ell}^a_2$, $\ell^b_2$, $\ell^{a_x}_2$, $\ell^{a_y}_2, \ell_3, \ell_3'$ are chosen below so that $\|(x,y)\|_1\leq n$ and $x+y+n$ is an even integer. Then
\begin{equation}\label{general}
P(S_n=(x,y)) =  f_{x,y}(n) \cdot \exp\left\{-\sum_{\ell =1}^{\infty} \frac{1}{2\ell(2\ell-1)}\frac{(x+y)^{2\ell}+(y-x)^{2\ell}}{n^{2\ell -1}}\right\},
\end{equation}
where, if $0\leq x\leq y$, with the notation $a_-=a_y-a_x, a_+=a_x+a_y$, 

\begin{equation}\label{2ddetails}
f_{x,y}(n) \!=\!   \left\{
\begin{array}{ll}
\frac{2}{\pi n}\left(1 + \bigo{\frac{n+x^2+y^2}{n^2}}\!\right), & \|(x,y)\|_1=o(n),\\ [.7cm]
\frac{2}{\pi n\sqrt{1\!-\!4a^2}}\!\left(\!1 \!\!+ \!\!\mathcal{O}\!\left(\!\frac{n|\ell_2^{a}(n)+\tilde{\ell}_2^{a}(n)|+n+(\tilde{\ell}_2^{a}(n)-\ell_2^{a}(n))^2}{n^2}\!\right)\!\right)\!,
& \begin{array}{ll}
\hspace{-0.1pc}\!\!x\!=\!an\!+\!\ell_2^{a}(n),\\ 
\hspace{-0.1pc}\!\!y\!=\!an\!+\!\tilde{\ell}_2^{a}(n), 
\end{array}
\\[.7cm]
\frac{2}{\pi n(1-b^2)}\left(1 + \bigo{\frac{|\ell_1(n)|+|\ell_2^{a}(n)|}{n}}\right), & 
\begin{array}{ll}
\hspace{-0.1pc}\!\!x=\ell_1(n), \\
\hspace{-0.1pc}\!\! y=bn+\ell_2^{b}(n),
\end{array} \\
[.7cm]
\frac{2}{\pi n\sqrt{1\!-\!2(a_x^2+a_y^2)\!+\!(a_-a_+)^2}}\!\left(\!1 \!\!+\!\! \bigo{\!\frac{|\ell_2^{a_x}(n)|+|\ell_2^{a_y}(n)|}{n}\!}\! \right), & \begin{array}{ll}
\hspace{-0.1pc}\!\!x\!=\!a_xn\!+\!\ell_2^{a_x}(n),\\ 
\hspace{-0.1pc}\!\!y\!=\!a_yn\!+\!\ell_2^{a_y}(n), \\
\hspace{-0.1pc}\!\!a_x+a_y<1,
\end{array}
\\[.7cm]
\frac{\sqrt{2}}{\pi \sqrt{n\ell_3(n)}}\left(1 \!\!+\!\! \bigo{\frac{(\ell_1'(n)-\ell_1(n))^2}{n^2}\!+\!\frac{\ell_3(n)}{n}\!+\!\frac{1}{\ell_3(n)}}\right), &  \begin{array}{ll}
\hspace{-0.1pc}\!\!x\!=\!n/2\!+\!\ell_1(n),\\
\hspace{-0.1pc}\!\! y\!=\!n/2\!+\!\ell_1'(n),\\
\hspace{-0.1pc}\!\!x+y=n-\ell_3(n), 
\end{array}
\\[.7cm]
\frac{\sqrt{2}}{\pi \sqrt{n(1\!-\!a_-^2)\ell_3(n)}}\left(\!1 \!\!+\!\! \bigo{\frac{|\ell_2^{a_y}(n)|+|\ell_2^{a_x}(n)|}{n}\!+\!\frac{1}{\ell_3(n)}}\right)\!, &  \begin{array}{ll}
\hspace{-0.1pc}\!\!x\!=\!a_xn\!+\!\ell_2^{a_x}(n), \\
\hspace{-0.1pc}\!\!y\!=\!a_yn\!+\!\ell_2^{a_y}(n),\\
\hspace{-0.1pc}\!\!x+y=n-\ell_3(n),
\end{array}
\\[.7cm]
\frac{1}{\pi \sqrt{\ell_3^2(n)-\ell_1^2(n)}}\left(1 + \bigo{\frac{\ell_3(n)}{n}+\frac{1}{|\ell_3'(n)}|}\right), & 
\begin{array}{ll}
\hspace{-0.1pc}\!\! x=\ell_1(n),\\
\hspace{-0.1pc}\!\! y=n-\ell_3(n), \\
\hspace{-0.1pc}\!\! \ell_3(n)\!\!-\!\ell_1(n)\!=\!\ell_3'(n),
\end{array}
\\[.7cm]
\frac{d\sqrt{2}e^{-1/6c}}{\pi \sqrt{nc}}\left(1  + \bigo{\frac{(\ell_1'(n)-\ell_1(n))^2+n}{n^2}}\right), &  \begin{array}{ll}
\hspace{-0.1pc}\!\!x\!=\!n/2\!+\!\ell_1(n),\\
\hspace{-0.1pc}\!\! y\!=\!n/2\!+\!\ell_1'(n),\\
\hspace{-0.1pc}\!\!x+y=n-c,
\end{array}
\\[.7cm]
\frac{d\sqrt{2}e^{-1/6c}}{\pi \sqrt{nc(1-a_-^2)}}\left(1  + \bigo{\frac{1+|\ell_2^{a_y}(n)-\ell_2^{a_x}(n)|}{n}}\right), &  \begin{array}{ll}
\hspace{-0.1pc}\!\!x\!=\!a_xn\!+\!\ell_2^{a_x}(n),\\
\hspace{-0.1pc}\!\! y\!=\!a_yn\!+\!\ell_2^{a_y}(n),\\
\hspace{-0.1pc}\!\!x+y=n-c,
\end{array}
\\[.7cm]
\frac{de^{-1/c}}{\pi \sqrt{c(\ell_1(n)+\ell_3(n))}}\left(1 + \bigo{\frac{\ell_1(n)}{n}}+ \bigo{\frac{1}{\ell_1(n)}}\right), &  \begin{array}{ll}
\hspace{-0.1pc}\!\!x=\ell_1(n),\\ 
\hspace{-0.1pc}\!\!y=n-\ell_3(n), \\
\hspace{-0.1pc}\!\!\ell_3(n)-\ell_1(n)=c,
\end{array}
\\[.7cm]
\frac{d^2e^{-c_y/3(c_y^2-c_x^2)}}{\pi \sqrt{c_y^2-c_x^2}}\left(1 + \bigo{\frac{1}{n}}\right), & x=c_x, y=n\!-\!c_y,\\[.5cm]
\sqrt{\frac{2}{\pi n}}\left(1 + \bigo{\frac{\ell_3(n)^2+n}{n^2}}\right), &  \begin{array}{ll}
\hspace{-0.1pc}\!\!x\!=\!n/2\!-\!\ell_3(n),\\
\hspace{-0.1pc}\!\! y\!=\!n/2\!+\!\ell_3(n),
\end{array}
\\[.7cm]
\sqrt{\frac{2}{\pi n(1-a_-^2)}}\left(1 + \bigo{\frac{\ell_2^{a_x}(n)}{n}}\right), &  \begin{array}{ll}
\hspace{-0.1pc}\!\!x\!=\!a_xn\!+\!\ell_2^{a_x}(n),\\
\hspace{-0.1pc}\!\! y\!=\!a_yn\!+\!\ell_2^{a_y}(n),\\
\hspace{-0.1pc}\!\! x+y=n,
\end{array}
\\[.7cm]
\frac{1}{\sqrt{2\pi \ell_3(n)}}\left(1 \!\!+\!\! \bigo{\frac{\ell_3(n)}{n}} + \bigo{\frac{1}{\ell_3(n)}} \right), & \begin{array}{ll}
\hspace{-0.1pc}\!\!x\!=\!\ell_3(n),\\ 
\hspace{-0.1pc}\!\!y\!=\!n\!-\!\ell_3(n), 
\end{array}
\\[.7cm]
\frac{de^{-1/12c}}{\sqrt{2\pi c}} \left(1 + \bigo{\frac{1}{n}}\right), & x=c, y=n-c,\\[.5cm]
1, & x=0, y=n,
\end{array}
\right.
\end{equation}

 \begin{figure}[htb!]\label{lpdisks}
\centering%
\includegraphics[scale=0.9]{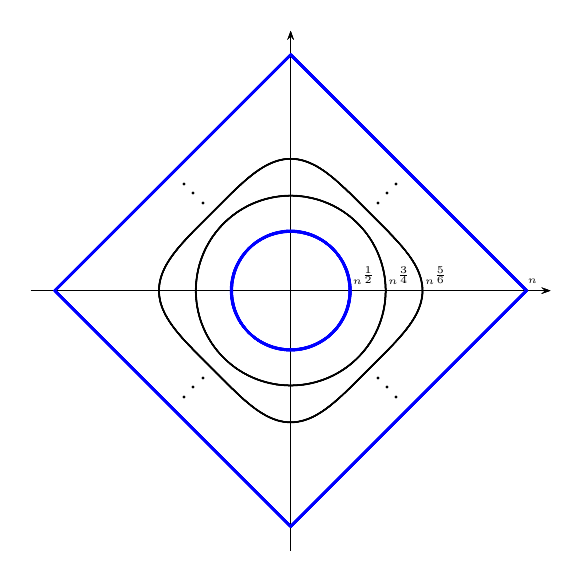}
\caption{
Equation \eqref{general} shows that  
a new term appears in the sum in the exponent for $P(S_n=(x,y))$ every time $(x,y)$ reaches a new $L^{2\ell}$ ball of radius $n^{1-1/2\ell}, \ell\geq 1$, with axes $x+y$ and $x-y$; Corollary \ref{coro} shows that rotational invariance does not extend beyond the disk of radius $n^{3/4}$.}
\end{figure}

\noindent where the $\bigo{\cdot}$ terms may depend on the constants $a, a_x, a_y, b, c, c_x$, and $c_y$, and for general $x,y, f_{x,y}(n)$ is obtained from the values above by the axial symmetries of the walk.

Moreover, if $x^2+y^2
\leq n^2/\log^4(n)$ 
and $|y^2-x^2|\leq n^{3/2}/\log(n)$, we have
\begin{equation}\label{regular}
P(S_n=(x,y))  =\frac{2}{\pi n} e^{-s_0} \left(1\!
 +\bigo{\frac{y^2-x^2}{n^2}}+\bigo{\frac{(y^2-x^2)^2}{n^3}}\right)
\end{equation}
where 
$$s_0=\sum_{\ell \geq 1} \frac{1}{4\ell(2\ell\! -\!1)}\frac{(2x)^{2\ell}+(2y)^{2\ell}}{n^{2\ell -1}}.$$

In particular, for all $n\in\N, N\geq 2, a<(2N-1)/2N$, and $(x,y)$ such that $\sqrt{x^2+y^2}\leq n^a$ and $|y^2-x^2|\leq n^{3/2}/\log(n)$, we have 
\begin{align} \notag
P(S_n=(x,y)) = & \frac{2}{\pi n} \exp\left\{-\sum_{\ell = 1}^{N-1} \frac{1}{4\ell(2\ell\! -\!1)}\frac{(2x)^{2\ell}+(2y)^{2\ell}}{n^{2\ell -1}}\right\}\\ \notag
& \hspace{2pc} \cdot \left(1\!
 +\!\bigo{\frac{y^2-x^2}{n^2}}\!+\!\bigo{\frac{(y^2-x^2)^2}{n^3}} \!+\!\bigo{\frac{(x^2+y^2)^N}{N^{2N-1}}} \right).
\end{align}
\end{thm}

\begin{remark}
The different cases listed in \eqref{2ddetails} are ordered so that the variable $x+y$ goes in increasing order as in the cases of \eqref{1dimdetails}, and, within cases where $x+y$ is of the same order, so that the variable $y-x$ goes in increasing order as in \eqref{1dimdetails}. 
\end{remark}

\begin{figure}[htb!]
\centering%
\includegraphics[scale=0.5]{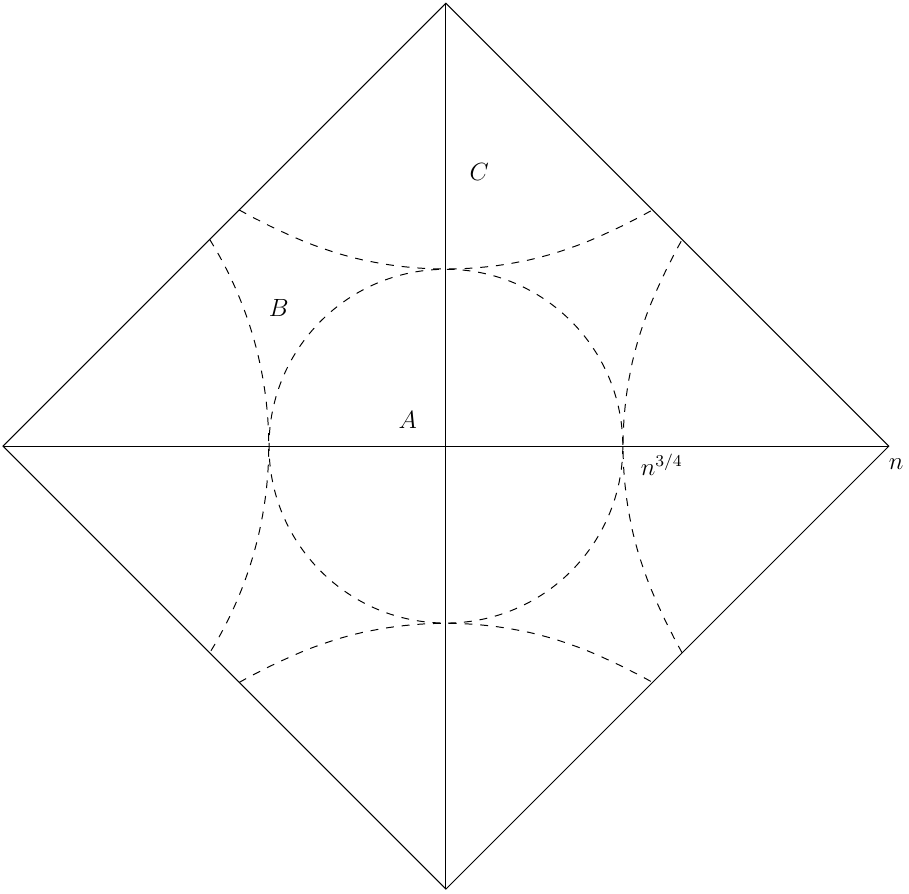}
\caption{
The picture in two dimensions: The square $\{(x,y)\in\Z^2:|x+y|\leq n\}$ is the set of attainable points by $S_n$. Regions $A, B$, and $C$ correspond to the different regimes; In region $A, P(S_n=(x,y))$ is as for Brownian motion; in region $B$, the probability is logarithmically asymptotic to $-\sum_{\ell \geq 1} \frac{1}{4\ell(2\ell\! -\!1)}\frac{(2x)^{2\ell}+(2y)^{2\ell}}{n^{2\ell -1}}$. In region $C$ there is an additional exponential correction term.}
\end{figure}

\begin{remark}
As in the one-dimensional case, the lines in \eqref{2ddetails} that contain the unknown $d$ can be rewritten in terms of binomial coefficients since, as can be seen in the proof of Theorem \ref{LCLTthm2d}, \eqref{general} is obtained by considering the probabilities that independent one-dimensional simple random walks are at $x+y$ and $x-y$, respectively.
\end{remark}

\begin{remark}
While this may not be obvious algebraically, the two expressions in \eqref{general} and \eqref{regular} are of course equivalent when $x^2+y^2
\leq n^2/\log^4(n)$ 
and $|y^2-x^2|\leq n^{3/2}/\log(n)$. While the first expression applies to a larger range of points, the method that leads to it cannot be extended to higher dimensions. The method that leads to \eqref{regular}, however, carries the advantage of being extensible to higher dimensions.
\end{remark}

\begin{remark} The exponent in \eqref{general} is $-n\Lambda(x/n)$ where $\Lambda$ is the large deviations rate function for symmetric simple random walk. See the Appendix for more details.
\end{remark}

We will say that planar simple random walk is approximately rotationally symmetric in the disk of radius $r=r(n)$ if  for all $s\leq r$ and all points $x,x'\in\Z^2$ with $s-3\leq \|x\|_2, \|x'\|_2\leq s$ and such that $\|x\|_1+n$ and $\|x'\|_1+n$ are both even, we have $P(S_n=x)\sim P(S_n=x')$ as $n\to\infty$. Donsker's invariance principle says that if $S$ is planar simple random walk interpolated linearly between integer times and for $n\in\N, 0\leq t\leq 1$, one defines $\tilde{S}_n(t)=\frac{1}{\sqrt{n}}S(2nt)$, then the sequence $\tilde{S}_n$ converges weakly to planar standard Brownian motion $\{B(t):0\leq t\leq 1\}$ on $\mathcal{C}[0,1]$. Corollary \ref{coro} shows that this principle misses some of the subtle differences between $B$ and $S$. Indeed, on the rare events where $\tilde{S}$ goes beyond the circle 
of radius $n^{1/4}$, the distribution of the paths of $\tilde{S}$ differs radically from that of its scaling limit, since the latter is rotationally symmetric, while the former is much more likely to be along the coordinate axes than on the diagonals of $(1/\sqrt{n})\Z^2$:

\begin{cor}\label{coro} Planar symmetric simple random walk is approximately rotationally symmetric in the disk of radius $r=r(n)$ when $r=o(n^{3/4})$. However, for every $n^{3/4}\leq r\leq n$, there are points $x, x'\in\Z^2$ with $\|x\|_2, \|x'\|_2\in[r-3,r]$ and
$$
\lim_{n\to\infty} \frac{P(S_n=x)}{P(S_n=x')}\neq 1.
$$
Moreover, if $r\leq n$ and $n^{3/4}=o(r)$, there exist points $x, x'\in\Z^2$ with $\|x\|_2, \|x'\|_2\in[r-3,r]$ and
\begin{equation}\label{divergence2}
\lim_{n\to\infty} \frac{P(S_n=x)}{P(S_n=x')}=0.
\end{equation}

\end{cor}

\begin{remark}
While it is not a priori obvious that it should happen around the circle of radius $n^{3/4}$, the loss of rotational symmetry of symmetric simple random walk is not surprising, since if $\|S_n\|_2=n$, there are only four possible locations for $S_n$.
\end{remark}





\section{Proof of Theorem \ref{LCLTthm1d}}\label{1d}
The proof of Theorem \ref{LCLTthm1d} is purely combinatorial and uses nothing more than Stirling's formula with error estimates and Taylor's theorem. We begin by stating the four lemmas needed in our proof. We will use the following version of Stirling's formula (see \cite{stirling} for a proof):
\begin{lem}\label{stirlinglemma} 
For all $n\in\N$, 
\begin{equation*}
n! = \sqrt{2\pi n}\left(n/e\right)^n e^{r_n}
\end{equation*}
with $1/(12n+1) < r_n < 1/12n$.
\end{lem}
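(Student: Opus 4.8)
\emph{Proof proposal.} The plan is to prove this classical bound from scratch by the monotone--auxiliary--sequence method, being careful to keep the constants $\tfrac1{12n}$ and $\tfrac1{12n+1}$ sharp. First I would introduce
\[
d_n \;=\; \log(n!)-\Big(n+\tfrac12\Big)\log n+n ,
\]
so that $n!=\sqrt n\,(n/e)^n e^{d_n}$, and note that the lemma is equivalent to showing that $d_n$ converges to some constant $C$ with $C=\tfrac12\log(2\pi)$, together with the two-sided estimate $\tfrac1{12n+1}<d_n-C<\tfrac1{12n}$. The driving identity is the one-step difference, which a line of algebra reduces to $d_n-d_{n+1}=(n+\tfrac12)\log\frac{n+1}{n}-1$. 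Putting $t=\tfrac1{2n+1}$, so that $\frac{n+1}{n}=\frac{1+t}{1-t}$ and $n+\tfrac12=\frac1{2t}$, and inserting the expansion $\log\frac{1+t}{1-t}=2\sum_{j\ge0}\frac{t^{2j+1}}{2j+1}$, I get
\[
d_n-d_{n+1}=\sum_{j\ge1}\frac{t^{2j}}{2j+1}=\sum_{j\ge1}\frac{1}{(2j+1)(2n+1)^{2j}}>0 ,
\]
so $(d_n)$ is strictly decreasing.

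Next I would extract the error bounds by summing this identity over $k\ge n$; granting that $(d_n)$ has a limit $C$ (which the upper estimate below also establishes, since it shows $(d_n)$ is bounded below), we have $r_n:=d_n-C=\sum_{k\ge n}(d_k-d_{k+1})$. For the upper bound, replace $\tfrac1{2j+1}$ by $\tfrac13$ to get a geometric series:
\[
d_k-d_{k+1}<\frac13\sum_{j\ge1}(2k+1)^{-2j}=\frac{1}{3\big((2k+1)^2-1\big)}=\frac1{12}\Big(\frac1k-\frac1{k+1}\Big),
\]
which telescopes to $r_n<\tfrac1{12n}$. For the lower bound I would use the sharper elementary inequality $2j+1\le 3^j$ $(j\ge1)$, giving
\[
d_k-d_{k+1}\;\ge\;\sum_{j\ge1}\big(3(2k+1)^2\big)^{-j}=\frac{1}{3(2k+1)^2-1};
\]
a direct comparison of fractions shows $\frac{1}{3(2k+1)^2-1}\ge\frac1{12k+1}-\frac1{12(k+1)+1}$ for every $k\ge1$ (it boils down to $12k\ge 11$), so summing telescopes to $r_n\ge\tfrac1{12n+1}$, and both inequalities are in fact strict.

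Finally I would identify $C=\tfrac12\log(2\pi)$ by inserting the relation $n!\sim\sqrt n\,(n/e)^n e^{C}$ into the Wallis product $\tfrac{\pi}{2}=\lim_{n\to\infty}\frac{2^{4n}(n!)^4}{((2n)!)^2(2n+1)}$, which forces $e^{2C}=2\pi$. Substituting $C$ back into $d_n$ yields $n!=\sqrt{2\pi n}\,(n/e)^n e^{r_n}$ with $\tfrac1{12n+1}<r_n<\tfrac1{12n}$, as claimed.

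I expect the only slightly delicate point to be the lower bound on $r_n$: the bound $r_n<\tfrac1{12n}$ drops out of a one-line geometric overestimate, whereas obtaining the constant $12n+1$ (rather than something weaker) requires the less obvious estimate $2j+1\le 3^j$ together with the fraction comparison above, so one must resist discarding too much of the series $\sum_j t^{2j}/(2j+1)$. Everything else --- the reduction to $d_n$, the $\log\frac{1+t}{1-t}$ expansion, and the Wallis evaluation of $C$ --- is routine.
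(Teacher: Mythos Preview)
The paper does not give its own proof of this lemma; it simply cites a reference and states the result. Your proposal therefore goes beyond what the paper does, supplying a complete argument where the paper offers none.

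The argument you outline is the classical one (essentially Robbins' proof), and it is correct: the reduction to $d_n$, the expansion of $d_n-d_{n+1}$ via $\log\frac{1+t}{1-t}$, the geometric overestimate giving $r_n<\tfrac1{12n}$, the inequality $2j+1\le 3^j$ followed by the telescoping comparison for $r_n>\tfrac1{12n+1}$, and the Wallis identification of the limiting constant all go through as you describe. One minor arithmetic slip: in the fraction comparison for the lower bound, expanding $(12k+1)(12k+13)-12\big(3(2k+1)^2-1\big)$ gives $24k-11$, not $12k-11$; this is still positive for every $k\ge1$, so the conclusion is unaffected. The strictness of both inequalities is also fine, since in each direction at least one term of the series comparison is strict.
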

In \cite{mortici}, a sharp bound, valid for all $n$, on the term $e^{r_n}$ of Lemma \ref{stirlinglemma} is given:
\begin{lem}\label{stirlingmortici} 
For all $n\in\N$, 
\begin{equation*}
\omega e^{1/12n} \leq e^{r_n}<  e^{1/12n},
\end{equation*}
where $r_n$ is as in Lemma \ref{stirlinglemma} and $\omega=\frac{e^{11/12}}{\sqrt{2\pi}}\approx 0.9977$.
\end{lem}

The following Lemma will prove to be useful when trying to obtain precise estimates for the factorial terms appearing in the expression for $P(S_n=x)$. 
\begin{lem}\label{errorlemma}
 If 
$r_n$ is as in Lemma \ref{stirlinglemma}, 
 then for all $n\in\N, x\in\Z$ with $|x| < n$ such that $x+n$ is even, 
\begin{eqnarray}\notag
\beta_x(n) &:= & \frac{\exp(r_n)}{\exp(r_{\frac{n+x}{2}})\exp(r_{\frac{n-x}{2}})}\\
& = &
\left\{
\begin{array}{ll} \notag
1-\frac{1}{4n}+\bigo{\frac{x^2}{n^3}+\frac{1}{n^2}}, & |x|=\ell_1(n)\\
1-\frac{1}{n}\left(\frac{3+a^2}{12(1-a^2)}\right)+\bigo{\frac{\ell_2^{a}(n)}{(1-a^2)^2n^2}}, & |x|=an+\ell_2^{a}(n), 0<a<1\\  \notag
1-\frac{1}{6\ell_3(n)}+\bigo{\frac{1}{\ell_3^2(n)}}+\bigo{\frac{1}{n}}, & |x|=n-\ell_3(n)\\
\end{array}
\right.,
\end{eqnarray}
where $\ell_1, \ell_2^{a}, \ell_3$ are as in the statement of Theorem \ref{LCLTthm1d}. Moreover, for any even $c\in\N$ such that $|c|<n$, if $|x|=n-c$, 
\begin{equation}\label{case4}
\omega e^{-1/6c}\left(1+\bigo{\frac{1}{n}}\right) \leq \beta_{x}(n)\leq \frac{1}{\omega^2} e^{-1/6c}\left(1+\bigo{\frac{1}{n}}\right),
\end{equation}
where $\omega=\frac{e^{11/12}}{\sqrt{2\pi}}$.
\end{lem}
\begin{proof} 
Note that $e^{r_n}=e^{1/12n}\left(1+\bigo{n^{-2}}\right)$ and that 
$$\beta_x(n)=\exp\left\{-\frac{3n^2+x^2}{12n(n^2-x^2)}\right\}\left(1+\bigo{n^{-2}}+\bigo{(n+x)^{-2}}+\bigo{(n-x)^{-2}}\right).$$
The first three cases follow directly from Taylor-expansion and \eqref{case4} is an easy consequence of Lemma \ref{stirlingmortici}.
\end{proof}

The last lemma we need in the proof of Theorem \ref{LCLTthm1d} is an easy application of Taylor's theorem. Some details of this straightforward calculation can be found in \cite{benesnotes}.
\begin{lem}\label{taylor}For all $z\in\R$ with $|z|<1$,
\begin{equation*}
\log(1+z)+\log(1-z)+z\left(\log(1+z)-\log(1-z)\right) = \sum_{\ell\geq 1}\frac{z^{2\ell}}{\ell(2\ell-1)}.
\end{equation*}
Moreover, 
\begin{equation}\label{Taylor} 
\sum_{\ell\geq 1}\frac{1}{2\ell(2\ell-1)}=\log 2.
\end{equation}
\end{lem}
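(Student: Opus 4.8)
The plan is to prove both identities by reducing everything to the standard Maclaurin series $\log(1+z)=\sum_{k\geq 1}(-1)^{k+1}z^k/k$ and $\log(1-z)=-\sum_{k\geq 1}z^k/k$, both valid for $|z|<1$. First I would add the two series: all odd-degree terms cancel and the even-degree terms combine to give $\log(1+z)+\log(1-z)=-\sum_{\ell\geq 1}z^{2\ell}/\ell$ (which is just $\log(1-z^2)$). Next I would subtract them: the even-degree terms cancel and the odd ones double, so $\log(1+z)-\log(1-z)=2\sum_{m\geq 0}z^{2m+1}/(2m+1)$, whence $z\bigl(\log(1+z)-\log(1-z)\bigr)=2\sum_{\ell\geq 1}z^{2\ell}/(2\ell-1)$. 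Adding the two expressions yields $\sum_{\ell\geq 1}z^{2\ell}\bigl(\tfrac{2}{2\ell-1}-\tfrac1\ell\bigr)$, and the coefficient simplifies to $\tfrac{1}{\ell(2\ell-1)}$ because $2\ell-(2\ell-1)=1$. Since $|z|<1$, all the series involved converge absolutely, so the regrouping of terms is legitimate, and the first identity follows.

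For the evaluation $\sum_{\ell\geq1}\tfrac{1}{2\ell(2\ell-1)}=\log 2$, I would note that the left-hand side of the first identity collapses into closed form: $\log(1+z)+z\log(1+z)=(1+z)\log(1+z)$ and $\log(1-z)-z\log(1-z)=(1-z)\log(1-z)$, so the identity reads $(1+z)\log(1+z)+(1-z)\log(1-z)=\sum_{\ell\geq1}z^{2\ell}/(\ell(2\ell-1))$ for all $|z|<1$. Now let $z\uparrow 1$: the left side tends to $2\log 2+0=2\log 2$ since $(1-z)\log(1-z)\to 0$, while the right side tends to $\sum_{\ell\geq1}1/(\ell(2\ell-1))$; this limit is legitimate because the coefficients satisfy $1/(\ell(2\ell-1))\sim 1/(2\ell^2)$, so the series converges at $z=1$ and Abel's theorem (or monotone convergence, as all terms are nonnegative) applies. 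Hence $\sum_{\ell\geq1}1/(\ell(2\ell-1))=2\log 2$, and dividing by $2$ gives the claim.

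There is essentially no hard step here: the whole argument is bookkeeping with absolutely convergent power series plus one boundary limit. The only points that deserve a word of care are (i) the term-by-term rearrangement when combining the two logarithm series, which is immediate from absolute convergence on $\{|z|<1\}$, and (ii) the passage to the limit $z\uparrow 1$ on the right-hand side, which follows from Abel's theorem together with the convergence of $\sum 1/(\ell(2\ell-1))$. Alternatively, one could evaluate $\sum_{\ell\geq1} 1/(\ell(2\ell-1))$ directly by partial fractions, using $\tfrac{1}{\ell(2\ell-1)}=\tfrac{2}{2\ell-1}-\tfrac1\ell$ and recognizing the partial sums as those of $2(1-\tfrac12+\tfrac13-\cdots)=2\log 2$, which sidesteps Abel's theorem; I would record this as a remark.
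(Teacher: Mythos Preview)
Your argument is correct and is precisely the ``easy application of Taylor's theorem'' the paper alludes to: expand $\log(1\pm z)$ as Maclaurin series, combine, and then evaluate at $z\to 1$ (your Abel/monotone-convergence justification and the partial-fractions alternative are both fine). The paper gives no further details beyond this, so there is nothing to compare.
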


\noindent {\it Proof of Theorem \ref{LCLTthm1d}.}
Assume $x+n$ is even. $P(S_n=\pm n) = \left(\frac{1}{2}\right)^{n}$, so Lemma \ref{taylor} yields the last line of \eqref{1dimdetails}. For $|x|<n$, Lemma \ref{stirlinglemma} 
gives
\begin{eqnarray}\label{LCLTprob}
P(S_n=x) & = & {n \choose
  \frac{n+x}{2}}\left(\frac{1}{2}\right)^{n} = \left(\frac{1}{2}\right)^{n} \frac{n!}{(\frac{n+x}{2})!(\frac{n-x}{2})!}\nonumber \\
& = & \frac{\sqrt{2\pi
    n}}{\sqrt{(n+x)\pi}\sqrt{(n-x)\pi}}\frac{(n/e)^{n}}{(\frac{n+x}{2e})^{(n+x)/2}(\frac{n-x}{2e})^{(n-x)/2}}\left(\frac{1}{2}\right)^{n}\beta_{x}(n) \\
& = & \frac{\sqrt{2\pi
    n}}{\sqrt{(n+x)\pi}\sqrt{(n-x)\pi}} \exp(\phi(n,x))\beta_{x}(n),\nonumber
\end{eqnarray}
where $\phi(n,x) = \log\left(\frac{n^{n}}{(n+x)^{(n+x)/2}(n-x)^{(n-x)/2}}\right)$ and $\beta_x(n)$ is as defined in Lemma \ref{errorlemma}. Lemma \ref{taylor} gives, for $|x| < n$,
\begin{eqnarray}\label{phi(n,x)} \notag
\phi(n,x) & = & n\log n - \frac{n+x}{2}\log(n+x) - \frac{n-x}{2}\log(n-x)\nonumber\\ 
&= & -\frac{n}{2}\left(\log(1+\frac{x}{n})+\log(1-\frac{x}{n}) +\frac{x}{n}\left(\log(1+\frac{x}{n})-\log(1-\frac{x}{n})\right)\right)\\ 
& = &-\S{\ell=1}{\infty}{\frac{1}{2\ell(2\ell-1)}\frac{x^{2\ell}}{n^{2\ell-1}}}.\nonumber
\end{eqnarray}

Also, one can easily see that if $0<a<1$ and $\ell_1, \ell_2^{a}, \ell_3$ are as in the statement of Theorem \ref{LCLTthm1d} and $c\in\N$ is constant, then
\begin{eqnarray}\label{part2} \notag
\frac{\sqrt{2\pi n}}{\sqrt{(n+x)\pi}\sqrt{(n-x)\pi}}  & = & 
\! \sqrt{\frac{2}{\pi n}}\sqrt{\frac{1}{1-x^2/n^2}}\\
 & = &
 \!\left\{
\begin{array}{ll} 
\sqrt{\frac{2}{\pi n}}\left(1+\bigo{\frac{x^2}{n^2}}\right), & |x|=\ell_1(n)\\
\sqrt{\frac{2}{\pi n}}\frac{1}{\sqrt{1-a^2}}\left(1+\bigo{\frac{\ell_2^a(n)}{(1-a^2)n}}\right), & |x|= an+\ell_2^{a}(n),\\
\frac{1}{\sqrt{\pi \ell_3(n)}}(1+\bigo{\ell_3(n)/n}), & |x| = n-\ell_3(n), \\
\frac{1}{\sqrt{\pi c}}(1+\bigo{c/n}), & |x| = n-c.
\end{array}
\right.
\end{eqnarray}





Using Lemma \ref{errorlemma}, \eqref{phi(n,x)}, and \eqref{part2} to rewrite \eqref{LCLTprob} and noting that \eqref{Taylor} yields $\exp(\phi(n,n)) = \left(\frac{1}{2}\right)^{n}$ concludes the proof of \eqref{1dimprob}. Equation \eqref{1das} is obtained by noting that it is a particular case of the first line of \eqref{1dimdetails} and that the additional error term comes from the missing terms of \eqref{phinx} in the truncated sum of \eqref{1das}.

\hfill $\square$



\section{Proofs of Theorem  \ref{LCLTthm2d} and Corollary \ref{coro}}\label{2d}

In this section, $S$ will denote planar symmetric simple random walk. In order to prove Theorem \ref{LCLTthm2d},
we will need the following standard estimate which follows from the Proposition in Section 14.8 of \cite{williams}:

\begin{figure}
\label{diagonal}
\centering%
\includegraphics[scale=0.45]{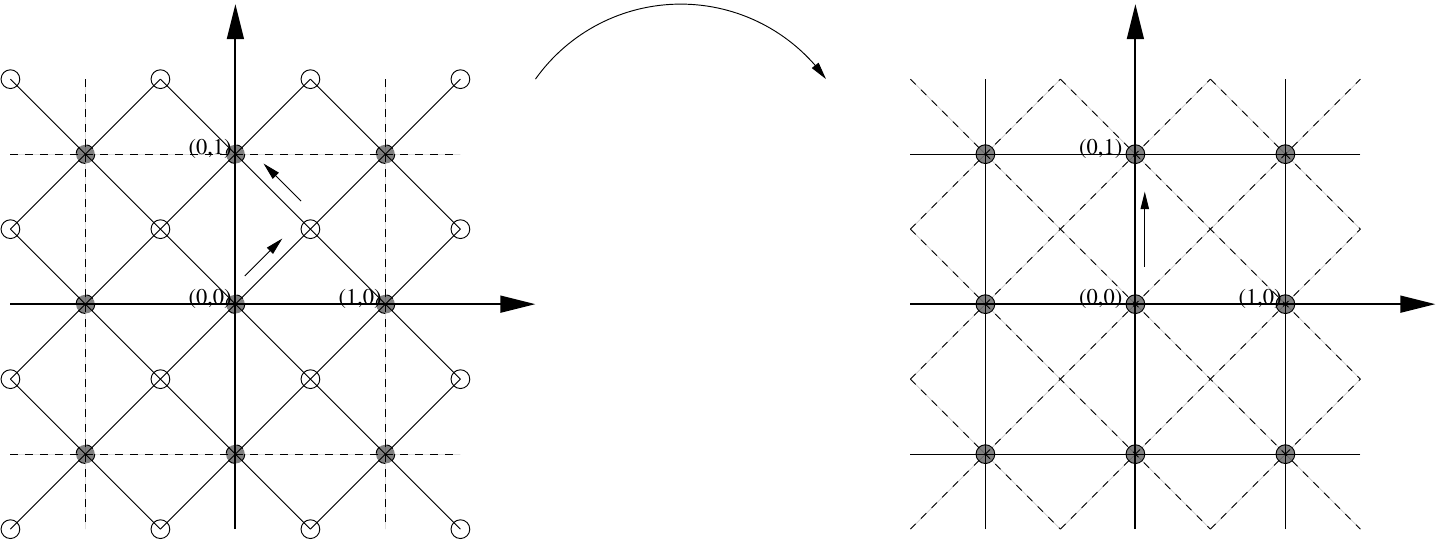}
\caption{
Two independent one-dimensional symmetric simple random walks on diagonal lattices give a symmetric simple random walk in $\Z^2$.}
\end{figure}

\begin{lem}\label{wil}
If $\phi$ is the standard normal density function and $c>0$, 
$$\int_c^{\infty} \phi(x)\,dx = \frac{1}{c}\phi(c)\left(1+\bigo{\frac{1}{c^2}}\right).$$
\end{lem}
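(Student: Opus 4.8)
The plan is to prove Lemma \ref{wil} by a single integration by parts, exploiting the identity $\phi'(x) = -x\phi(x)$ satisfied by the standard normal density $\phi(x) = (2\pi)^{-1/2}e^{-x^2/2}$. For $c>0$ I would first rewrite
$$\int_c^{\infty}\phi(x)\,dx = \int_c^{\infty}\frac{1}{x}\bigl(x\phi(x)\bigr)\,dx = \int_c^{\infty}\frac{1}{x}\bigl(-\phi'(x)\bigr)\,dx,$$
and then integrate by parts with $u = 1/x$ and $dv = -\phi'(x)\,dx$, so that $v=-\phi(x)$ and $du = -x^{-2}\,dx$. Since $\phi(x)/x\to 0$ as $x\to\infty$, the boundary term at infinity vanishes and one obtains the exact identity
$$\int_c^{\infty}\phi(x)\,dx = \frac{\phi(c)}{c} - \int_c^{\infty}\frac{\phi(x)}{x^2}\,dx.$$

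The next step is to control the remainder integral. On the range of integration $x\ge c$, so $1/x^2\le 1/c^2$, giving
$$0 \le \int_c^{\infty}\frac{\phi(x)}{x^2}\,dx \le \frac{1}{c^2}\int_c^{\infty}\phi(x)\,dx.$$
Writing $I := \int_c^{\infty}\phi(x)\,dx$, the two displays say $I\le \phi(c)/c$ and $\phi(c)/c = I + R$ with $0\le R\le I/c^2$, hence $\phi(c)/c \le I(1+1/c^2)$. Combining, for $c\ge 1$,
$$\frac{\phi(c)}{c}\cdot\frac{1}{1+1/c^2} \;\le\; I \;\le\; \frac{\phi(c)}{c},$$
and since $1/(1+1/c^2)\ge 1-1/c^2$, this is precisely $I = \frac{\phi(c)}{c}\bigl(1+\bigo{1/c^2}\bigr)$ as $c\to\infty$. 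If one wishes to display the leading correction explicitly, a second integration by parts applied to $\int_c^{\infty}\phi(x)/x^2\,dx$ (again using $\phi' = -x\phi$, now with $u=1/x^3$) yields $\phi(c)/c^3 - 3\int_c^\infty \phi(x)/x^4\,dx = \frac{\phi(c)}{c^3}\bigl(1+\bigo{1/c^2}\bigr)$, refining the estimate to $I = \frac{\phi(c)}{c}\bigl(1 - 1/c^2 + \bigo{1/c^4}\bigr)$; but this sharper form is not needed for the statement.

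There is essentially no obstacle here: the only point requiring a moment's care is that the crude one-sided bound $\int_c^{\infty}\phi \le \phi(c)/c$ must be upgraded to a genuine two-sided estimate. This is accomplished by feeding the trivial bound $1/x^2\le 1/c^2$ back into the integration-by-parts identity and solving the resulting self-referential inequality for $I$, as above. Everything else is elementary calculus, and the whole argument is standard (it is, for instance, the Mills-ratio asymptotic referenced in \cite{williams}).
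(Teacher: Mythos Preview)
Your proof is correct and is exactly the classical Mills-ratio argument; the paper does not give its own proof of this lemma but simply cites \cite{williams}, Section 14.8, where the same integration-by-parts computation appears. So your approach coincides with the referenced one.
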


\noindent {\it Proof of Theorem \ref{LCLTthm2d}.} 
Let $X_k^{\pi/4}$ and $X_k^{3\pi/4}$ be independent random vectors with distribution
$$P(X_k^{\pi/4}=\pm \frac{1}{\sqrt{2}}e^{i\pi/4}) = P(X_k^{3\pi/4}=\pm\frac{1}{\sqrt{2}}e^{i3\pi/4}) = \frac{1}{2}.$$
Then if we let for $n\geq 1, S^{\pi/4}(n)=\S{k=1}{n}{X_k^{\pi/4}}$ and $S^{3\pi/4}(n)=\S{k=1}{n}{X_k^{3\pi/4}}$ then $S^{\pi/4}(n)$ and $S^{3\pi/4}(n)$  are
independent symmetric simple random walks on
$\frac{1}{\sqrt{2}}e^{i\pi/4}\cdot\Z =
\{\frac{l}{\sqrt{2}}e^{i\pi/4}:l\in\Z\}$ and
$\frac{1}{\sqrt{2}}e^{i3\pi/4}\cdot\Z$, respectively, and $S_n =
(S^{\pi/4}(n),S^{3\pi/4}(n))$ is a symmetric simple random walk in $\Z^2$. 
We can use this well-known simple idea, which works only in dimension 2, and Theorem \ref{LCLTthm1d} to obtain \eqref{general} for $(x,y)\in\Z^2$ with $\|(x,y)\|_1\leq n, x+y+n$ even, from the equality
\begin{equation}\label{2dfrom1d}
P(S_n=(x,y))  =  P\left(S^{\pi/4}_n=\frac{x+y}{\sqrt{2}}\right)P\left(S^{3\pi/4}_n=\frac{y-x}{\sqrt{2}}\right).
\end{equation}
The sixteen cases of \eqref{2ddetails} are then obtained by considering 
the different regimes of \eqref{1dimprob} in each of the two probabilities of \eqref{2dfrom1d}, more specifically, by considering the different ways in which $x$ and $y$ can lead $x+y$ and $y-x$ to fall in each of the different regimes of \eqref{1dimprob}.

We now use a different approach to estimate $P(S_n=(x,y))$ when $(x,y)\in\Z^2, x+y\equiv n  (\text{mod }2)$, and $x^2+y^2\leq n^2/\log^4 n$. We will assume without loss of generality that $0\leq x\leq y\leq n/\log^2 n$.  For now we don't assume $|y^2-x^2|\leq n^{3/2}/\log n$ as the first steps of our derivation are more general.
We let $N_1$ be the 
number of steps taken by $S[0,n]$ in the horizontal direction. Then given the event $\{N_1=k\}$, $S_n$ has the distribution of $(S^{(1)}(k),S^{(2)}(n-k))$, where $S^{(1)}$ and $S^{(2)}$ are independent one-dimensional symmetric simple random walks,
 so for $(x,y)\in\Z^2$, 
\begin{equation}\label{planarprob}
P(S_n  = (x,y)) = \sum_{\substack{j=-\lfloor n/2\rfloor+x\\j+\lfloor\frac{n}{2}\rfloor+x \text{ even}}}^{\lceil n/2\rceil-y}P_{n,x,y,j},
\end{equation}
where for $-\lfloor \frac{n}{2}\rfloor\leq j\leq \lceil\frac{n}{2}\rceil$, 
\begin{equation*}
P_{n,x,y,j} = P(N_1=\lfloor \frac{n}{2}\rfloor+j)P(S^{(1)}(\lfloor\frac{n}{2}\rfloor+j)=x)P(S^{(2)}(\lceil\frac{n}{2}\rceil-j)=y).
\end{equation*}
To understand the restriction on $j$ in the sum of \eqref{planarprob}, note that by the assumption that $x+y+n$ is even, $\lfloor \frac{n}{2}\rfloor+j+x$ and $\lceil \frac{n}{2}\rceil-j+y$ are both even if and only if $j$ and $\lfloor \frac{n}{2}\rfloor+x$ have same parity.

One can show as in the proof of Theorem \ref{LCLTthm1d} that for all $j = o(n)$, 
\begin{equation}\label{est1}
P(N_1=\lfloor \frac{n}{2}\rfloor\!+\!j)  = 
\sqrt{\frac{2}{\pi n}}\exp\left\{-\sum_{\ell\geq 1}\frac{4^\ell}{2\ell(2\ell-1)}\frac{j^{2\ell}}{n^{2\ell -1}}\right\}\left(1+\bigo{\frac{1+|j|}{n}}\right).
\end{equation}
and that if $j\neq o(n)$, there is a constant $C$ such that
\begin{equation}\label{est1b}
P(N_1=\lfloor \frac{n}{2}\rfloor\!+\!j)  \leq
C \exp\left\{-\sum_{\ell\geq 1}\frac{4^\ell}{2\ell(2\ell-1)}\frac{j^{2\ell}}{n^{2\ell -1}}\right\},
\end{equation}
and use directly Theorem \ref{LCLTthm1d} and the binomial expansion of $(1+x)^{1-2\ell}$ to see that if $\lfloor \frac{n}{2}\rfloor +j + x$ is even, $\lfloor \frac{n}{2}\rfloor +j \geq x$, and $j=o(n)$, 
\begin{align}\label{est2} \notag
P(S^{(1)}(\lfloor \frac{n}{2}\rfloor &+j)=x) \\ \notag
& =   \exp\left\{-\sum_{\ell \geq 1} \frac{1}{2\ell(2\ell -1)}\frac{x^{2\ell}}{\left(\frac{n}{2}(1+\frac{2j}{n}-\frac{1}{n}\mathbbm{1}_{\{n\text{ odd}\}})\right)^{2\ell -1}}\right\} \\ \notag
&  \hspace{3pc} \cdot \sqrt{\frac{2}{\pi\frac{n}{2}(1+\frac{2j}{n}-\frac{1}{n}\mathbbm{1}_{\{n\text{ odd}\}})}}\left(1+\bigo{\frac{1}{n}}+\bigo{\frac{x^2}{n^2}}\right) \\ 
& =  \frac{2}{\sqrt{\pi n}} \exp\left\{-\sum_{k\geq 0} \sum_{\ell \geq 1} \frac{1}{4\ell(2\ell\! -\!1)}{1-2\ell \choose k}\frac{(2x)^{2\ell}(2j)^k}{n^{2\ell +k -1}}\right\} E_{x,j,n},
\end{align}
where $E_{x,j,n} = \left(1\!+\!\bigo{\frac{|j|+1}{n}}+\bigo{\frac{x^2}{n^2}}
\right)$, and if $\lceil \frac{n}{2}\rceil -j + y$ is even, $\lceil \frac{n}{2}\rceil -j \geq y$, and $j=o(n)$,
\begin{align}\label{est3} \notag
P(S^{(2)}(\lceil \frac{n}{2}\rceil -&j)\!=\!y)\\
& =  \frac{2}{\sqrt{\pi n}} \exp\left\{-\sum_{k\geq 0} \sum_{\ell \geq 1} \frac{1}{4\ell(2\ell\! -\!1)}{1-2\ell \choose k}\frac{(2y)^{2\ell}(-2j)^k}{n^{2\ell +k -1}}\right\}E_{y,j,n}.
\end{align}
Moreover, there exists a constant $C$ such that for all $n, x, y, j$, 
\begin{equation}\label{est2b}
P(S^{(1)}(\lfloor \frac{n}{2}\rfloor +j)=x)\leq C \exp\left\{-\sum_{k\geq 0} \sum_{\ell \geq 1} \frac{1}{4\ell(2\ell\! -\!1)}{1-2\ell \choose k}\frac{(2x)^{2\ell}(2j)^k}{n^{2\ell +k -1}}\right\}
\end{equation}
and
\begin{equation}\label{est3b}
P(S^{(2)}(\lceil \frac{n}{2}\rceil -j)\!=\!y)\leq C\exp\left\{-\sum_{k\geq 0} \sum_{\ell \geq 1} \frac{1}{4\ell(2\ell\! -\!1)}{1-2\ell \choose k}\frac{(2y)^{2\ell}(-2j)^k}{n^{2\ell +k -1}}\right\}.
\end{equation}
If we define for $k\geq 0$,
$$s_k=\sum_{\ell \geq 1} \frac{2^k}{4\ell(2\ell\! -\!1)}{1-2\ell \choose k}\frac{(-1)^k(2y)^{2\ell}+(2x)^{2\ell}}{n^{2\ell +k -1}},$$
combining \eqref{est1}, \eqref{est2}, and \eqref{est3} and using in the error terms our assumption that $0\leq x\leq y$ yields for $j=o(n)$ with $\lfloor \frac{n}{2}\rfloor+j+x$ even
\begin{align}\label{pnxyjestimated}\notag
P_{n,x,y,j} = \frac{4\sqrt{2}}{(\pi n)^{3/2}}\exp\bigg\{-\sum_{\ell\geq 1}\frac{4^\ell}{2\ell(2\ell-1)}&\frac{j^{2\ell}}{n^{2\ell -1}}-\sum_{k\geq 0}j^ks_k\bigg\}\\
&\cdot \left(1\!+\!\bigo{\frac{|j|+1}{n}}+\bigo{\frac{y^2}{n^2}}\right)
\end{align}
and \eqref{est1b}, \eqref{est2b}, and \eqref{est3b} imply that there is a constant $C$ such that for all $n, x, y,$ and $j$,
\begin{equation}\label{pnxyjbounded}
P_{n,x,y,j} \leq C\exp\left\{-\sum_{\ell\geq 1}\frac{4^\ell}{2\ell(2\ell-1)}\frac{j^{2\ell}}{n^{2\ell -1}}-\sum_{k\geq 0}j^ks_k\right\}.
\end{equation}

We now turn to showing \eqref{regular} and therefore assume that $y^2-x^2\leq n^{3/2}/\log n$. The main idea is to separate the sum in \eqref{planarprob} into on one hand a sum which contains the dominant terms and can be estimated precisely by comparing it to an integral and on the other hand a sum which is of smaller order of magnitude and which we can afford to bound crudely.
Note that since we are assuming that $0\leq x\leq y\leq n/\log^2(n)$, we have, for $k\geq 0$,
\begin{equation}\label{sestimate}
s_{2k} = c_{2k}\frac{x^2+y^2}{n^{2k+1}}\left(1+\bigo{\frac{y^2}{n^2}}\right) \; \text{ and } \; s_{2k+1} = c_{2k+1}\frac{y^2-x^2}{n^{2k+2}}\left(1+\bigo{\frac{y^2}{n^2}}\right),
\end{equation}
with $c_k\geq 0$ for all $k\geq 0$, so that if $|j|\leq n^{1/2}\log(n)$ and $\lfloor \frac{n}{2}\rfloor+j+x$ is even,
\begin{align*}
P_{n,x,y,j} =  \frac{4\sqrt{2}}{(\pi n)^{3/2}}  \exp&\left\{-s_0-\frac{2j^2}{n}-js_1\right\}\\
& \cdot \left(1+\bigo{\frac{|j|+1}{n}}\!+\!\bigo{\frac{j^2}{n\log^4(n)}} +\bigo{\frac{y^2}{n^2}}  \right),
\end{align*}
with all $\bigo{\cdot}$ terms actually being $o(1)$. We observe that $s_1\leq c_1(n^{1/2}\log(n))^{-1}$, which implies that there is some $C>0$ such that $ns_1^2/8 \leq C/\log^{2}(n)$. Therefore,
\begin{align*}
\exp\left\{-\frac{2j^2}{n}-js_1\right\} &=  \exp\left\{-\frac{2}{n}\left(j+\frac{ns_1}{4}\right)^2\right\}e^{ns_1^2/8}\\
& =  \exp\left\{-\frac{2}{n}\left(j+\lfloor \frac{ns_1}{4}\rfloor\right)^2\right\}e^{ns_1^2/8}\\
&\hspace{2.2pc}\cdot\left(1+\bigo{\frac{|j|+1}{n}}+\bigo{\frac{y^2-x^2}{n^2}}\right)\\
& =  \exp\left\{-\frac{2}{n}\left(j+\lfloor \frac{ns_1}{4}\rfloor\right)^2\right\}\\
&\hspace{2.2pc}\cdot\left(1+\bigo{\frac{|j|+1}{n}}+\bigo{\frac{y^2-x^2}{n^2}}+\bigo{\frac{(y^2-x^2)^2}{n^3}}\right),
\end{align*}
which implies that
\begin{align}\label{dominantsum1} \notag
\sum_{\substack{j=-\lfloor n^{1/2}\log(n)\rfloor\\j+\lfloor\frac{n}{2}\rfloor+x \text{ even}}}^{\lfloor n^{1/2}\log(n)\rfloor} P_{n,x,y,j} & = \frac{4\sqrt{2}}{(\pi n)^{3/2}} e^{-s_0} \sum_{j=-\lfloor n^{1/2}\log(n)\rfloor}^{\lfloor n^{1/2}\log(n)\rfloor}  e^{-\frac{2j^2}{n}-js_1} \\ \notag
& \hspace{3pc} \cdot \left(1\!+\!\bigo{\frac{|j|+1}{n}}\!+\!\bigo{\frac{j^2}{n\log^4(n)}} \!+\!\bigo{\frac{y^2}{n^2}}  \right)\\ \notag
& = \frac{4\sqrt{2}}{(\pi n)^{3/2}} e^{-s_0} \sum_{j=-\lfloor n^{1/2}\log(n)\rfloor+\lfloor \frac{ns_1}{4}\rfloor}^{\lfloor n^{1/2}\log(n)\rfloor+\lfloor \frac{ns_1}{4}\rfloor}  e^{-\frac{2j^2}{n}}\\ \notag
& \hspace{3pc} \cdot \left(1\!+\!\bigo{\frac{|j+\lfloor \frac{ns_1}{4}\rfloor|+1}{n}}\!+\!\bigo{\frac{(j+\lfloor \frac{ns_1}{4}\rfloor)^2}{n\log^4(n)}}
\right)\\ 
&  \hspace{6pc}\cdot \left(1\!
 +\bigo{\frac{y^2-x^2}{n^2}}+\bigo{\frac{(y^2-x^2)^2}{n^3}}\right).
\end{align}
Since $y^2-x^2\leq n^{3/2}/\log(n)$, $ns_1/4\leq Cn^{1/2}/\log(n)$, so for $n$ large enough,
\begin{equation}\label{riemann}
\sum_{\substack{j=-\lfloor n^{1/2}\log(n)/2\rfloor\\\\j+\lfloor\frac{n}{2}\rfloor+x \text{ even}}}^{\lfloor n^{1/2}\log(n)/2\rfloor}  e^{-\frac{2j^2}{n}} \leq \sum_{\substack{j=-\lfloor n^{1/2}\log(n)\rfloor+\lfloor \frac{ns_1}{4}\rfloor\\\\j+\lfloor\frac{n}{2}\rfloor+x \text{ even}}}^{\lfloor n^{1/2}\log(n)\rfloor+\lfloor \frac{ns_1}{4}\rfloor}  e^{-\frac{2j^2}{n}}  \leq  \sum_{\substack{j=-2\lfloor n^{1/2}\log(n)\rfloor\\j+\lfloor\frac{n}{2}\rfloor+x \text{ even}}}^{2\lfloor n^{1/2}\log(n)\rfloor}  e^{-\frac{2j^2}{n}}.
\end{equation}
Recognizing the outer sums of \eqref{riemann} to be Riemann sums and using the monotonicity properties of the function $e^{-2x^2}$, we can use Lemma \ref{wil} to write
\begin{align}\label{leftsum} \notag
\sum_{\substack{j=-\lfloor n^{1/2}\log(n)/2\rfloor\\j+\lfloor\frac{n}{2}\rfloor+x \text{ even}}}^{\lfloor n^{1/2}\log(n)/2\rfloor}  e^{-\frac{2j^2}{n}} & \geq 2\sum_{\substack{j=0\\j+\lfloor\frac{n}{2}\rfloor+x \text{ even}}}^{\lfloor n^{1/2}\log(n)/2\rfloor}  e^{-\frac{2j^2}{n}} - 1\\ \notag
& \geq \sqrt{n}\int_0^{\log (n)/2}e^{-2x^2}\,dx +\bigo{1}\\ \notag
& = \frac{\sqrt{n}}{2}\left(\sqrt{\frac{\pi}{2}}-\int_{\log(n)}^{\infty}e^{-x^2/2}\,dx\right)+\bigo{1}\\
& =\frac{1}{2}\sqrt{\frac{\pi n}{2}}\left(1
+\bigo{1/\sqrt{n}}\right)
\end{align} 
and similarly,
\begin{equation}\label{rightsum}
\sum_{\substack{j=-2\lfloor n^{1/2}\log(n)\rfloor\\j+\lfloor\frac{n}{2}\rfloor+x \text{ even}}}^{2\lfloor n^{1/2}\log(n)\rfloor}  e^{-\frac{2j^2}{n}} \leq \sqrt{n}\int_{-4\log(n)}^{4\log(n)}e^{-2x^2}\,dx +\bigo{1} = \frac{1}{2}\sqrt{\frac{\pi n}{2}}\left(1
+\bigo{1/\sqrt{n}}\right).
\end{equation} 
Combining \eqref{riemann}, \eqref{leftsum}, and \eqref{rightsum}, we get
\begin{equation}\label{centralsum}
\sum_{\substack{j=-\lfloor n^{1/2}\log(n)\rfloor+\lfloor \frac{ns_1}{4}\rfloor\\j+\lfloor\frac{n}{2}\rfloor+x \text{ even}}}^{\lfloor n^{1/2}\log(n)\rfloor+\lfloor \frac{ns_1}{4}\rfloor}  e^{-\frac{2j^2}{n}} =\frac{1}{2}\sqrt{\frac{\pi n}{2}}\left(1+\bigo{1/\sqrt{n}}\right).
\end{equation}
Therefore,  \eqref{dominantsum1} and \eqref{centralsum} give the estimate
\begin{equation}\label{dominantsum2}
\sum_{\substack{j=-\lfloor n^{1/2}\log(n)\rfloor\\j+\lfloor\frac{n}{2}\rfloor+x \text{ even}}}^{\lfloor n^{1/2}\log(n)\rfloor} P_{n,x,y,j}  = \frac{2}{\pi n} e^{-s_0} \left(1\!
 +\bigo{\frac{y^2-x^2}{n^2}}+\bigo{\frac{(y^2-x^2)^2}{n^3}}\right).
\end{equation}
We will now show that $\sum_{|j| \geq \lfloor n^{1/2}\log(n)\rfloor} P_{n,x,y,j}$ is of smaller order than the sum obtained in \eqref{dominantsum2}. Differentiating with respect to $j$ the exponent in the expression for $P_{n,x,y,j}$ in \eqref{pnxyjbounded},
\begin{equation}\label{fnxyj}
f(j) = f_{n,x,y}(j) = -s_0 -\sum_{\ell\geq 1}\frac{4^\ell}{2\ell(2\ell-1)}\frac{j^{2\ell}}{n^{2\ell -1}}-\sum_{k\geq 1}j^ks_k,
\end{equation}
shows that $f$ has a unique maximum which occurs at $j_0=-\lfloor \frac{y^2-x^2}{2n}\rfloor$ or $j'_0=-\lceil \frac{y^2-x^2}{2n}\rceil$.

Note that when $y^2-x^2\leq n^{3/2}/\log(n), j_0$ and $j'_0$ belong to the index set of the sum in \eqref{dominantsum2}. It is easy to see that in that case, on the set $\{j\in\Z: |j|\geq \lfloor n^{1/2}\log(n)\rfloor\}$,
$f$ is maximal when $j=-\lfloor n^{1/2}\log(n)\rfloor$ and, using \eqref{sestimate}, that
$$f(-\lfloor n^{1/2}\log(n)\rfloor)=-s_0-2\log^2(n)+\bigo{1}.$$
Therefore, using \eqref{pnxyjbounded}, we see that there exists a constant $C>0$ such that
\begin{equation}\label{partialsum2}
\sum
P_{n,x,y,j} \leq nP_{n,x,y,-\lfloor n^{1/2}\log(n)\rfloor} \leq Cn^{-1/2}e^{-s_0-2\log^2(n)},
\end{equation}
where the sum is over the set 
$$\{j\in\Z: -\lfloor n/2\rfloor +x\leq j\leq -\lfloor n^{1/2}\log n\rfloor \text{ or } \lfloor n^{1/2}\log n\rfloor\leq j \leq \lfloor n/2\rfloor -y\}.$$
Combining \eqref{planarprob}, \eqref{dominantsum2}, and \eqref{partialsum2} now yields \eqref{regular}.

\hfill $\square$

\noindent {\it Proof of Corollary \ref{coro}} 
Suppose $s-3\leq \|(x,y)\|_2\leq s=o(n^{3/4})$. Then $x, y=o(n^{3/4})$, implying that $x+y, y-x=o(n^{3/4})$, so by \eqref{general} and \eqref{2ddetails}, if $x+y+n$ is even,
\begin{eqnarray*}
P(S_n=(x,y)) & = & \frac{2}{\pi n}\exp\left\{-\frac{1}{2}\frac{(x+y)^2+(y-x)^2}{n}\right\}(1+o(1))\\
& = & \frac{2}{\pi n}\exp\left\{-\frac{x^2+y^2}{n}\right\}(1+o(1))=  \frac{2}{\pi n}\exp\left\{-\frac{s^2}{n}\right\}(1+o(1)),
\end{eqnarray*}
which proves approximate rotational symmetry in the disk of radius $r$ for any $r=o(n^{3/4})$.

Note that \eqref{divergence2} is obvious for $n/\sqrt{2}+3<r\leq n$ since for such $r$, we can find two points at distance from the origin between $r-3$ and $r$ that have $L^1$ norms with same parity, exactly one of which is attainable by the random walk, so we focus on $r\leq n/\sqrt{2}+3$
. For any such $r$, there exist $s\in [r-3,r]$ with $(\sqrt{s/2}, \sqrt{s/2})\in\Z^2$ and $s'\in [r-3,r]$ with $(s', 0)\in\Z^2$ such that the $L^1$ norms of the two points have same parity as $n$. By \eqref{general}
$$\log P(S(n)=(s',0))\sim -2\sum_{\ell\geq 1}\frac{1}{2\ell(2\ell-1)}\frac{s'^{2\ell}}{n^{2\ell-1}}$$
and
$$\log P(S(n)=(\sqrt{s/2},\sqrt{s/2}))\sim -\sum_{\ell\geq 1}\frac{1}{2\ell(2\ell-1)}\frac{2^\ell s^{2\ell}}{n^{2\ell-1}}.$$
Since for 
$r\geq n^{3/4}$, 
the exponent of $P(S(n)=(\sqrt{s/2},\sqrt{s/2}))$ is smaller than that of $P(S(n)=(s',0))$, we see that 
$$\lim_{n\to\infty}\frac{P(S(n)=(\sqrt{s/2},\sqrt{s/2}))}{P(S(n)=(s',0))}<1.$$
In particular, if $n^{3/4}=o(r)$, 
$$\lim_{n\to\infty}\frac{P(S(n)=(\sqrt{s/2},\sqrt{s/2}))}{P(S(n)=(s',0))}=0.$$
This concludes the proof.
\hfill $\square$



\section*{Appendix}

In this appendix, we give a brief summary of some of the work done throughout the 20th century that led towards the sharpest currently available general local limit theorems that are relevant to the problem discussed in this paper. We also verify that the expression obtained in \eqref{general} is indeed the simple random walk large deviations rate function 

In 1929, Khinchine (\cite{khinchine}) obtained an asymptotic expression for the quantity in \eqref{mainprob} for one-dimensional simple random walk, both symmetric and not, for all $|x|=o(n)$ which, in the symmetric case, corresponds to the first line in the expression of $P(S_n=x)$ in Theorem \ref{LCLTthm1d} of this paper. His approach, as ours, was combinatorial.

In 1938, Cram\'{e}r studied the ratio
$$\frac{1-P\left(S_n\leq \sigma\sqrt{n}x\right)}{1-\Phi(x)} \;\text{ as } x\to\infty,$$
where $\Phi$ is the standard normal distribution function, $X_1, X_2, \ldots $ are independent, identically distributed random variables for which the moment generating function is defined in a neighborhood of 0, $E[X_1]=0, \rm{Var}(X_1)=\sigma^2$, and $S_n=\sum_{i=1}^nX_i$. Making use of the Esscher transform (see \cite{esscher}; some authors also call this transform the Cram\'{e}r transform), he showed 
that for $x=o(\sqrt{n}/\log n)$,
\begin{equation}\label{cramer}
\frac{1-P\left(S_n\leq \sigma\sqrt{n}x\right)}{1-\Phi(x)}=\exp\left\{\frac{x^3}{\sqrt{n}}\lambda\left(\frac{x}{\sqrt{n}}\right)\right\}\left(1+\bigo{\frac{x\log n}{\sqrt{n}}}\right),
\end{equation}
where $\lambda(z)$ is power series that converges for sufficiently small $|z|$, the coefficients of which can be expressed in terms of the cumulants (also called semi-invariants) of $X_1$. Obtaining an explicit expression for $\lambda$ presented in this form is difficult even for the simplest choice of $X_1$, as the coefficients of the series become increasingly complicated with each term. However, it turns out that there is a more convenient way of stating Cram\'{e}r's theorem in terms of the Legendre-Fenchel transform of the logarithmic moment generating function. See below in this section.

Feller (\cite{feller}) and Petrov (\cite{petrov}) extended Cram\'{e}r's argument and result to random variables that are not identically distributed. They also extended the range of points for which their expansions are valid to all $x=o(\sqrt{n})$. Cram\'{e}r's result is a consequence of Petrov's but not of Feller's whose assumptions are more restrictive that Cram\'{e}r's in the identically distributed case.

The results just mentioned give asymptotics of the tail probabilities of the random variable $\sum_{i=1}^nX_i/(\sigma\sqrt{n})$. In \cite{richter1}, Richter extended the ideas developed by Cram\'{e}r and used an inversion formula and the saddlepoint method to show that if independent lattice random variables $X_i$ (that is, whose possible values are all of the form $a+kh, k\in\Z$ for some $a$ and $h$, which is taken to be maximal) with mean 0, variance $\sigma^2$, and finite moment generating function in some neighborhood of 0, take values in $\Z$ 
then, with $x=\frac{an+kh}{\sigma\sqrt{n}}$, for $x>1, x=o(\sqrt{n})$, 
\begin{equation}\label{richter1}
\frac{P(S_n=\sigma\sqrt{n}x)}{\phi(x)}=\frac{h}{\sigma\sqrt{n}}\exp\left\{\frac{x^3}{\sqrt{n}}\lambda\left(\frac{x}{\sqrt{n}}\right)\right\}\left(1+\bigo{\frac{x}{\sqrt{n}}}\right),
\end{equation}
where $\lambda$ is the same as before and $\phi$ is the standard normal density function.

In \cite{richter2}, Richter applied the same ideas as in \cite{richter1} to irreducible, aperiodic $d$-dimensional random walks with finite moment generating function in a neighborhood of 0 to obtain a multi-dimensional extension of \eqref{richter1} in which an infinite sum of multilinear forms depending on the cumulants of the underlying random variables plays the role of $\lambda$ in \eqref{richter1}. 




It turns out that \eqref{cramer} has a much more convenient expression. Indeed, Cram\'{e}r's method was to express the ratio in  \eqref{cramer} in terms of the logarithmic moment generating function and quantities related to the Esscher transform of the summed random variables. The Esscher transform is a one-parameter family of random variables and a suitable choice of the parameter as a function of the point $x$ considered was at the heart of Cram\'{e}r's method. It follows directly from the proof of \eqref{cramer} that with $M$ the moment generating function of $X_1$,
$$P\left(S_n\geq \sigma\sqrt{n}x\right)  \approx  M(h)^ne^{-h\frac{\sigma x}{\sqrt{n}}n} =  \exp\left\{n\left(\log M(h)-h\frac{\sigma x}{\sqrt{n}}\right)\right\}.$$
Letting $h=\frac{x}{\sigma\sqrt{n}}$ leads, via Taylor expansions of the logarithmic moment generating function, to \eqref{cramer}, but also to the following expression where 
\begin{equation}\label{leg-fench}
\Lambda(\alpha)=\sup_{\lambda\in\R}\{\lambda \alpha-\log M(\lambda)\}
\end{equation}
is the Legendre-Fenchel transform of the logarithmic moment generating function:
$$P\left(S_n\geq \sigma\sqrt{n}x\right) \approx \exp\{-n\Lambda(\sigma x/\sqrt{n})\}.$$
Indeed, one can verify that the supremum in \eqref{leg-fench} is attained at a point which, for $x=o(\sqrt{n})$, is asymptotic to $\lambda=\frac{x}{\sigma\sqrt{n}}$. This observation was of use in \cite{borovkovmogulskii1}, where Borovkov and Mogul'skij extended Richter's results to points within $\bigo{n}$ of the mean and expressed the exponent in terms of $\Lambda$. In particular, for simple random walk, their result is that for $x\in\Z^d$ with $\|x\|_1+n$ even,
\begin{equation}\label{boromog}
P(S_n=x)=\frac{e^{-n\Lambda(\alpha)}}{(2\pi n)^{d/2}\sigma(\alpha)}(1+\eps(n,\alpha)),
\end{equation}
where $\alpha=x/n$,
\begin{equation}\label{legendrefenchel}
\Lambda(\alpha)=\sup_{\lambda\in\R^d}\{\langle \lambda, \alpha\rangle-\log M(\lambda)\}, \, \alpha\in\mathcal{X}
\end{equation}
is the Legendre-Fenchel transform of the moment-generating function $M$, $\sigma(\alpha) = \sqrt{\det(\Lambda''(\alpha))^{-1}}$, and for any open subset $G$ of $\mathcal{X}=\{x\in\R^d:\|x\|_1\leq 1\}$, 
$$\lim_{n\to\infty}\sup_{x\in\Z^d, \alpha\in G}|\eps(n,\alpha)|=0.$$

We conclude this appendix by verifying that in the case of planar symmetric simple random walk, the exponent in \eqref{boromog} coincides with the exponent we obtain in Theorem \ref{LCLTthm2d}. As the present paper shows, this exponent is valid for all points that are attainable by the walk.

It is easy to verify that with the definition \eqref{legendrefenchel} the expression in \eqref{phinx} is, as expected, equal to $-n\Lambda(x/n)$ when $d=1$, so we omit this calculation here. When $d=2$, the exponent in \eqref{general} is also equal to $-n\Lambda(x/n,y/n)$, though this takes considerably more work to verify. We outline here the main steps of that derivation.

For $\alpha=(\alpha_1,\alpha_2)\in \{x\in\R^2:\|x\|_1\leq 1\}$,
$$\Lambda(\alpha)=\sup_{(\lambda_1,\lambda_2)\in\R^2}\{\lambda_1\alpha_1+\lambda_2\alpha_2-\log (\frac{1}{2}(\cosh \lambda_1+\cosh \lambda_2))\}.$$
The supremum is attained when $\alpha_i=\frac{\sinh \lambda_i}{\cosh \lambda_1 + \cosh \lambda_2}$, i=1, 2, which is the case for real $\lambda_1, \lambda_2$ if and only if
\begin{align*}
\lambda_1&=\log \sqrt{\frac{(\alpha_1+1)^2-\alpha_2^2}{(\alpha_1-1)^2-\alpha_2^2}},\\
\lambda_2&=\log \sqrt{\frac{(\alpha_2+1)^2-\alpha_1^2}{(\alpha_2-1)^2-\alpha_1^2}}.
\end{align*}
With these values of $\lambda_1, \lambda_2$, one can check that
\begin{eqnarray*}
\Lambda(\alpha)&=&\frac{1}{2}\bigg((1+\alpha_1+\alpha_2)\log(1+\alpha_1+\alpha_2)+(1+\alpha_1-\alpha_2)\log(1+\alpha_1-\alpha_2)\\
&&\hspace{1pc}+(1-\alpha_1+\alpha_2)\log(1-\alpha_1+\alpha_2)+(1-\alpha_1-\alpha_2)\log(1-\alpha_1-\alpha_2)\bigg).
\end{eqnarray*}
Using the Taylor expansion of $\log(1+x)$ in this last expression, gives, after simplification, 
$$\Lambda(\alpha)=\sum_{\ell \geq 1}\frac{1}{2\ell(2\ell-1)}\left((\alpha_1+\alpha_2)^{2\ell}+(\alpha_1-\alpha_2)^{2\ell}\right),$$
so that indeed, $-n\Lambda(x/n,y/n)$ is the exponent in \eqref{general}.




\section*{Acknowledgements}
The author 
gratefully acknowledges support for his research through PSC-CUNY Awards \#60130-3839, \#62408-0040, and \# 63405-0041.

\bibliographystyle{amsplain}
\bibliography{LCLT}
\end{document}